\author{Antonio De Rosa, Domenico Angelo La Manna}
\title{A 
nonlocal approximation of the Gaussian perimeter: Gamma convergence and Isoperimetric properties}
\date{}
\newtheorem{trm}{Theorem}
\newtheorem{prop}[trm]{Proposition}
\newcommand{\R}[1]{\mathbb{R}^{#1}}
\newcommand{\de}{\partial}
\newcommand{\ve}{\varepsilon}
\newcommand{\M}[1]{\mathcal{#1}}
\newcommand{\abs}[1]{\left\arrowvert{#1}\right\arrowvert}
\newcommand{\haus}{\mathcal{H}}
\newcommand{\fromto}[2]{\colon #1 \longrightarrow #2}
\def\restrict#1{\raise-.5ex\hbox{\ensuremath|}_{#1}}
\newenvironment{proof}{\noindent\emph{Proof.}}{\hfill$\square$\medskip}
\DeclareMathOperator{\diver}{div}
\DeclareMathOperator{\loc}{loc}
\DeclareMathOperator*{\dist}{dist}
\DeclareMathOperator*{\Intm}{\int\!\!\!\!\!\! \rule[2.6pt]{6.5pt}{.4pt}}
\begin{document}
\maketitle

\begin{abstract}
We study a non local approximation of the Gaussian perimeter, proving the Gamma convergence to the local one. Surprisingly, in contrast with the local setting, the halfspace turns out to be a volume constrained stationary point if and only if the boundary hyperplane passes through the origin. In particular, this implies that Ehrhard symmetrization can in general increase the considered non local Gaussian perimeter.
\end{abstract}

\section{Introduction}

The Gaussian isoperimetric inequality says that the halfspace has the smallest Gaussian perimeter among all sets with prescribed Gaussian measure, \cite{Borell}. 
In the Euclidean setting, an increasing interest has been devoted to the study of non local approximations of the perimeter and their isoperimetric shapes, since the pioneering work of Caffarelli, Roquejoffre and Savin, \cite{CRS}.

%

The aim of this paper is to provide an analogous non local approximation of the Gaussian perimeter, showing the Gamma convergence to the local one. Moreover, we study the isoperimetric properties of this non local functional and observe that, in contrast with the local setting, an halfspace is a volume constrained critical point if and only if it has Gaussian measure $\frac 12$. In particular, we deduce that Ehrhard symmetrization can in general increase the considered non local Gaussian perimeter.

We remark that the non local approximation of the Gaussian perimeter we study is different from the one recently proposed in \cite{NPS}. 
The non local functional we introduce has the advantage of having a more explicit formulation, while has the drawback that the isoperimetric shapes and the Ehrhard symmetrization are not preserved.

Inspired by  \cite{ADPM}, for a measurable set $E\subset\R{n}$, $n\ge 1$, $0<s<1$, and a
connected, open set $\Omega\Subset\R{n}$ with Lipschitz boundary (or
simply $\Omega=(a,b)\Subset\R{}$ if $n=1$), we define the Gaussian, nonlocal
functional
$$\M{J}^\gamma_s(E, \Omega):=\M{J}^{1, \, \gamma}_s(E,\Omega)+\M{J}^{2, \, \gamma}_s(E, \Omega),$$
where
\begin{equation*}
\begin{split}
\M{J}^{1, \, \gamma}_s(E, \Omega)&:=\int_{E\cap \Omega}\int_{E^c\cap \Omega}\frac{\gamma(x, \, y)}{|x-y|^{n+s}} \, dxdy,\\
\M{J}^{2, \, \gamma}_s(E, \Omega)&:=\int_{E\cap \Omega}\int_{E^c\cap
\Omega^c}\frac{\gamma(x, \, y)}{|x-y|^{n+s}} \, dxdy+ \int_{E\cap
\Omega^c}\int_{E^c\cap \Omega}\frac{\gamma(x, \, y)}{|x-y|^{n+s}} \, dxdy,
\end{split}
\end{equation*}
and
\[
\gamma \fromto{\R{n} \times \R{n}}{\R{+}},\ \gamma(x, \, y) = \exp\left(- \frac{1}{4} \left(\abs{x}^2 + \abs{y}^2\right)\right).
\]
When $\Omega$ coincides with the whole space, we just write $\M{J}^{\gamma}_s(E)$. 

In \cite{ADPM}, Amborsio, De Philippis and Martinazzi have studied the Euclidean version of it, namely $\M{J}_s = \M{J}_s^1 + \M{J}_s^2$, and
\begin{equation*}
\begin{split}
\M{J}^1_s(E, \Omega)&:=\int_{E\cap \Omega}\int_{E^c\cap \Omega}\frac{1}{|x-y|^{n+s}} \, dxdy,\\
\M{J}^2_s(E, \Omega)&:=\int_{E\cap \Omega}\int_{E^c\cap
\Omega^c}\frac{1}{|x-y|^{n+s}} \, dxdy+ \int_{E\cap
\Omega^c}\int_{E^c\cap \Omega}\frac{1}{|x-y|^{n+s}} \, dxdy.
\end{split}
\end{equation*}
The authors point out that $\M{J}_s(E,\Omega)$ can be thought of as a fractional perimeter of $E$ in $\Omega$, and they show the $\Gamma$-convergence of $(1-s)\M{J}_s(\cdot,\Omega)$ to $\omega_{n-1}P(\cdot,\Omega)$  as $s\to 1^-$, where $\omega_{n-1}$ is the volume of the unit ball in $\R{n-1}$, $P(E, \, \Omega):= \haus^{n-1}(\M{F} E \cap \Omega)$ is the Euclidean perimeter, $\haus^\alpha$ denotes the classical $\alpha$-Hausdorff measure and $\M{F} E$ the reduced boundary of $E$. 
Moreover, they prove the convergence of any sequence $\{E_i\}$ of local minimizers for
$\M{J}_{s_i}(\cdot, \Omega)$ to a local minimizer for $P(\cdot,\Omega)$, see \cite[Theorem 3]{ADPM}.

The first aim of this paper is to generalize \cite[Theorem 3]{ADPM} to the Gaussian case, thus building a relation between the functional $\M{J}^{\gamma}_s$ and the Gaussian perimeter
\[
P^\gamma(E, \, \Omega):= \int_{\M{F} E \cap \Omega} e^{-\frac{1}{2} \abs{x}^2} \, d\haus^{n-1}(x).
\]
The second goal is to investigate whether the halfspaces are volume constrained critical points of $\M{J}^{\gamma}_s$. This turns out to be true if and only if the boundary hyperplane passes through the origin.

The paper is divided in four Sections. In Section 2 we prove the $\Gamma-$convergence of the functional $\M{J}^{\gamma}_s$ to $P^\gamma$. 
In Section 3 we compute the first and second variation of $\M{J}^{\gamma}_s$
(for the local framework see \cite{BBJ} or \cite{L}). 
In Section 4 we prove that halfspaces are volume constrained stationary points for $\M{J}^{\gamma}_s$ if and only if their Gaussian volume is $\frac 12$.

\section{The Gamma-convergence }
In this section we extend \cite[Theorem 3]{ADPM} to the Gaussian case. Namely, we show:
\begin{trm}[Convergence of local minimizers]\label{trm4}
Assume that $s_i\uparrow 1$, $E_i$ are local minimizers of
$\M{J}^{\gamma}_{s_i}(\cdot, \Omega)$, and $\chi_{E_i}\to\chi_E$ in
$L^1_{\loc}(\R{n})$. Then
\begin{equation}\label{Genergybou}
\limsup_{i\to \infty}(1-s_i)\M{J}^{\gamma}_{s_i}(E_i,\Omega')< + \infty \qquad \forall\Omega'\Subset\Omega,
\end{equation}
$E$ is a local minimizer of $P^{\gamma}(\cdot,\Omega)$ and
$(1-s_i)\M{J}^{\gamma}_{s_i}(E_i,\Omega')\to \omega_{n-1}P(E,\Omega')$ whenever
$\Omega'\Subset\Omega$ and $P(E,\partial\Omega')=0$.
\end{trm}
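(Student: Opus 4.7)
The plan is to adapt the strategy of \cite[Theorem 3]{ADPM} to the Gaussian setting, exploiting two features of the weight $\gamma(x,y)$: it is smooth, on any bounded subset of $\R{n}\times\R{n}$ uniformly bounded above and below by strictly positive constants, and its diagonal $\gamma(x,x)=e^{-|x|^{2}/2}$ reproduces exactly the weight appearing in $P^{\gamma}$. Consequently, on compact subdomains $\M{J}^{\gamma}_{s}$ is comparable to the Euclidean $\M{J}_{s}$, and after the $(1-s)$-rescaling it localizes precisely to $\omega_{n-1}P^{\gamma}$ (we read the right-hand side in the statement as $\omega_{n-1}P^{\gamma}(E,\Omega')$).

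First I would establish the energy bound \eqref{Genergybou}. Given $\Omega'\Subset\Omega''\Subset\Omega$, I construct an admissible competitor $F_{i}$ coinciding with $E_{i}$ outside $\Omega''$ and smooth in a collar of $\partial\Omega''$. Upper bounds on $\gamma$ over $\overline{\Omega''}\times\overline{\Omega''}$ reduce the bulk term $\M{J}^{1,\gamma}_{s_{i}}(F_{i},\Omega'')$ to the Euclidean estimate of \cite[Lemma 11]{ADPM}, producing a bound of order $C/(1-s_{i})$; the Gaussian decay of $\gamma$ handles the tail term $\M{J}^{2,\gamma}_{s_{i}}(F_{i},\Omega'')$. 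Minimality of $E_{i}$ transfers the estimate to $E_{i}$ itself.

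Next, to prove local minimality of $E$ for $P^{\gamma}$, I would use the standard $\Gamma$-convergence / local-minimality scheme. For any admissible competitor $F$ with $F\Delta E\Subset\Omega'\Subset\Omega$, I build a recovery sequence $F_{i}$ with $F_{i}\Delta E_{i}\Subset\Omega'$ and $\chi_{F_{i}}\to\chi_{F}$ in $L^{1}_{\loc}(\R{n})$ such that
\[
\limsup_{i\to\infty}(1-s_{i})\M{J}^{\gamma}_{s_{i}}(F_{i},\Omega'')\leq \omega_{n-1}P^{\gamma}(F,\Omega'').
\]
Combined with a matching liminf inequality for $E_{i}\to E$ and the minimality $\M{J}^{\gamma}_{s_{i}}(E_{i},\Omega'')\leq \M{J}^{\gamma}_{s_{i}}(F_{i},\Omega'')$, this gives $P^{\gamma}(E,\Omega')\leq P^{\gamma}(F,\Omega')$. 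Both inequalities follow from the Euclidean versions in \cite{ADPM} by a freezing procedure: partition $\Omega''$ into cubes $Q_{j}$ of side $\eta$ centered at $x_{j}$, replace $\gamma(x,y)$ by the constant $e^{-|x_{j}|^{2}/2}$ on $Q_{j}\times Q_{j}$ with error of order $\eta$, and let $\eta\to 0$ after $s_{i}\to 1$. Convergence of the energies when $P(E,\partial\Omega')=0$ is read off the same chain of inequalities, the no-concentration hypothesis on $\partial\Omega'$ allowing boundary contributions to be discarded in the limit.

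The main obstacle is the lack of translation invariance of $\gamma$: the Euclidean localization cannot be applied verbatim, because on a cube $Q_{j}$ the kernel couples with points far from $Q_{j}$ where $\gamma$ differs substantially from $e^{-|x_{j}|^{2}/2}$. The resolution exploits two mutually compensating effects: after multiplication by $(1-s_{i})$ only scales $|x-y|\to 0$ survive, concentrating mass on the diagonal; meanwhile the Gaussian tail of $\gamma$ renders long-range interactions, and the whole $\M{J}^{2,\gamma}_{s_{i}}$ term, of order $o(1)$ in $i$. Balancing diagonal concentration against Gaussian tail summability is the main technical content of the argument.
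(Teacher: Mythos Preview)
Your proposal is correct and follows essentially the same route as the paper: the authors also reduce to \cite[Theorem~3]{ADPM} by proving Gaussian $\Gamma$-liminf and $\Gamma$-limsup inequalities, obtained precisely by freezing $\gamma$ at blow-up/cube centers (so that it becomes the constant $e^{-|x_0|^2/2}$ up to an $O(r)$ error) and then invoking the Euclidean computations, together with the trivial bound $\gamma\le 1$ for the remaining steps. Your parenthetical reading of the limit as $\omega_{n-1}P^{\gamma}(E,\Omega')$ is also what the paper's Propositions~\ref{GammaLiminfProp} and~\ref{GammaLimsupProp} actually yield.
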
 
The proof of Theorem \ref{trm4} is almost identical to the Euclidean one for \cite[Theorem 3]{ADPM}. We limit our study to the parts which differ from it.  In particular we will prove the following two propositions. Let $\omega_k$ denote the volume of the unit ball in $\R{k}$ for $k\ge 1$, and set $\omega_0:=1$.
\begin{prop}\label{GammaLiminfProp}
For every measurable set $E\subset\R{n}$ we have
\begin{equation}\label{Gliminf}
\Gamma-\liminf_{s\uparrow 1}(1-s)\M{J}^{1, \, \gamma}_{s}(E,\Omega) \geq \omega_{n-1}P^\gamma(E,\Omega)
\end{equation}
w.r.t. the $L^1_{\rm
loc}$ convergence of the corresponding characteristic functions in
$\R{n}$, i.e. 
$$\liminf_{i\to \infty}(1-s_i)\M{J}^{1, \, \gamma}_{s_i}(E_i,\Omega)\geq \omega_{n-1}P^\gamma(E,\Omega)\qquad \text{whenever } \chi_{E_i}\to \chi_E \text{ in } L^1_{\loc}(\R{n}),\;s_i\uparrow 1.$$
\end{prop}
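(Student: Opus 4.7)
The plan is to reduce the Gaussian $\Gamma$-liminf to its Euclidean counterpart established in \cite{ADPM} via a localization argument. The key observation is that, as $s\uparrow 1$, the kernel $|x-y|^{-n-s}$ concentrates on the diagonal, so on a sufficiently small ball $B_\rho(x_k)\Subset\Omega$ the weight $\gamma(x,y)=e^{-(|x|^2+|y|^2)/4}$ is essentially constant and close to $e^{-|x_k|^2/2}$, which is precisely the Gaussian density integrated in $P^\gamma$.

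First I would assume the liminf on the left-hand side is finite, otherwise the inequality is trivial. Since $\overline\Omega$ is bounded, $\gamma$ is bounded below on $\overline\Omega\times\overline\Omega$ by a positive constant, so $(1-s_i)\M{J}^1_{s_i}(E_i,\Omega)$ is uniformly bounded as well; by \cite{ADPM} this forces $E$ to have locally finite perimeter in $\Omega$, and in particular $P^\gamma(E,\Omega)<\infty$. Fix $\epsilon>0$. Using inner regularity of the perimeter measure $P(E,\cdot)$ together with the uniform continuity of $x\mapsto e^{-|x|^2/2}$ on $\overline\Omega$, I would select a finite family of pairwise disjoint balls $B_\rho(x_k)\Subset\Omega$, $k=1,\dots,N$, enjoying the following properties: $\gamma(x,y)\ge e^{-|x_k|^2/2}-\epsilon$ on each product $B_\rho(x_k)\times B_\rho(x_k)$; $|e^{-|x|^2/2}-e^{-|x_k|^2/2}|\le\epsilon$ on $B_\rho(x_k)$; $\sum_k P(E,B_\rho(x_k))\ge P(E,\Omega)-\epsilon$; and $P(E,\partial B_\rho(x_k))=0$ for every $k$ (which holds for a.e.\ $\rho$ by a standard Fubini argument).

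Using the disjointness of the balls, the pointwise lower bound on $\gamma$, and applying the Euclidean $\Gamma$-liminf of \cite{ADPM} on each Lipschitz subdomain $B_\rho(x_k)$, I would deduce
\begin{equation*}
\liminf_i (1-s_i)\M{J}^{1,\gamma}_{s_i}(E_i,\Omega) \;\ge\; \sum_{k=1}^{N} (e^{-|x_k|^2/2}-\epsilon)\,\omega_{n-1}\,P(E,B_\rho(x_k)).
\end{equation*}
Replacing $e^{-|x_k|^2/2}$ by the pointwise density $e^{-|x|^2/2}$ inside each integral (at the cost of a $C\epsilon$-error using the oscillation bound) and then invoking $\sum_k P(E,B_\rho(x_k))\ge P(E,\Omega)-\epsilon$ yields the lower bound $\omega_{n-1}\bigl(P^\gamma(E,\Omega)-C\epsilon\bigr)$ with $C=C(P(E,\Omega))$. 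Letting $\epsilon\to 0$ concludes the proof.

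The main obstacle is precisely the non-translation-invariance of $\gamma(x,y)$, which prevents a direct application of \cite{ADPM}: the localization must occur at a scale that is simultaneously small enough to treat $\gamma$ as essentially constant and coarse enough to leave only finitely many balls before the liminf in $i$ is taken, so that the $\liminf$ may be exchanged with the (finite) sum. This is handled by fixing $\epsilon$ first, producing a finite disjoint cover detecting the reduced boundary up to $\epsilon$, taking $\liminf_i$, and only then letting $\epsilon\to 0$. The remaining verifications—applicability of the Euclidean $\Gamma$-liminf to arbitrary balls $B_\rho(x_k)\Subset\Omega$ and the measure-theoretic construction of the cover—are essentially routine.
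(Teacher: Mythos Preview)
Your argument is correct, and it takes a genuinely different route from the paper. The paper essentially re-runs the blow-up proof of \cite{ADPM}: it defines the localized energies $\alpha(C_r(x_0))=(1-s_i)\M{J}^{1,\gamma}_{s_i}(E_i,C_r(x_0))$ on small cubes centered at points $x_0\in\M{F}E$, uses the scaling $\M{J}^{1,\gamma}_s(E,C_r(x_0))=r^{n-s}\M{J}^{1,\gamma_{x_0,r}}_s((E-x_0)/r,Q)$ together with $\gamma_{x_0,r}\to e^{-|x_0|^2/2}$ to reduce to the constant $\Gamma_n=\omega_{n-1}$ from \cite{ADPM}, and concludes via a fine Vitali covering. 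In other words, the paper only borrows from \cite{ADPM} the identification of the single constant $\Gamma_n$, while you borrow the full Euclidean $\Gamma$-liminf inequality and apply it as a black box on each of finitely many small balls, using uniform continuity of $\gamma$ to freeze the weight. Your approach is shorter and more modular; the paper's approach is more self-contained and would adapt more readily to weights lacking uniform continuity on $\overline\Omega\times\overline\Omega$. Two minor remarks: the condition $P(E,\partial B_\rho(x_k))=0$ is unnecessary for the $\Gamma$-liminf direction (it is only needed for the $\limsup$/equality statements), and the existence of your finite disjoint family follows from Vitali applied to the fine cover of a compact $K\subset\Omega$ with $\mu(K)\ge\mu(\Omega)-\epsilon/2$ by balls of radius at most the modulus of continuity of $\gamma$, followed by truncation of the resulting countable family.
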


\begin{prop}\label{GammaLimsupProp}
For every measurable set $E\subset\R{n}$ we have
\begin{equation}\label{Glimsup}
\Gamma-\limsup_{s\uparrow 1}(1-s)\M{J}^\gamma_{s}(E,\Omega) \leq \omega_{n-1}P^\gamma(E,\Omega)
\end{equation}
w.r.t. the $L^1_{\rm
loc}$ convergence of the corresponding characteristic functions in
$\R{n}$. Inequality \eqref{Glimsup} means that for every measurable set $E$ and sequence $s_i\uparrow 1$ there exists a sequence $E_i$ with $\chi_{E_i}\to \chi_E$ in $L^1_{\loc}(\R{n})$ such that
$$\limsup_{i\to\infty}(1-s_i)\M{J}^\gamma_{s_i}(E_i,\Omega)\leq \omega_{n-1} P^\gamma(E,\Omega).$$
\end{prop}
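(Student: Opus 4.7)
The strategy mirrors the Euclidean template of \cite{ADPM}, adapted to the Gaussian weight $\gamma$. If $P^\gamma(E,\Omega)=+\infty$ the inequality is trivial, so assume $P^\gamma(E,\Omega)<+\infty$. By mollifying $\chi_E$ and invoking Sard's lemma on the superlevel sets of the mollifications, I would produce bounded open sets $F_k\subset\R{n}$ with $C^\infty$ boundary satisfying $\chi_{F_k}\to\chi_E$ in $L^1_{\loc}(\R{n})$, $P^\gamma(F_k,\Omega)\to P^\gamma(E,\Omega)$ and $\haus^{n-1}(\partial F_k\cap\partial\Omega)=0$; since $\Omega\Subset\R{n}$ the Gaussian density is bounded above and below on $\Omega$, so this step is qualitatively the same as the Euclidean one. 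A standard diagonal extraction then reduces the construction of a recovery sequence for $E$ to the construction for each smooth $F_k$.

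For a fixed smooth $F$ with $\haus^{n-1}(\partial F\cap\partial\Omega)=0$, I would take the constant sequence $F_i:=F$ and prove
\[
\lim_{s\to 1^-}(1-s)\M{J}^\gamma_s(F,\Omega)=\omega_{n-1}P^\gamma(F,\Omega).
\]
Unravelling the definitions by a short symmetrisation gives
\[
\M{J}^\gamma_s(F,\Omega)=\frac{1}{2}\int_\Omega\!\int_\Omega\frac{\gamma(x,y)\,|\chi_F(x)-\chi_F(y)|}{|x-y|^{n+s}}dx\,dy+\int_\Omega\!\int_{\Omega^c}\frac{\gamma(x,y)\,|\chi_F(x)-\chi_F(y)|}{|x-y|^{n+s}}dx\,dy,
\]
and I would treat the two summands separately. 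On the first, which is $\M{J}^{1,\gamma}_s(F,\Omega)$, I perform the change of variables $y=x+r\theta$ with $r\in(0,\rho)$ and $\theta\in S^{n-1}$, localise in a tubular neighbourhood of $\partial F$, and exploit that $\gamma$ is continuous with $\gamma(x,x)=e^{-|x|^2/2}$, so that $\gamma(x,x+r\theta)=e^{-|x|^2/2}+O(r)$ uniformly on bounded subsets of $\R{n}$. The classical BBM half--space model on the tangent plane at each $x\in\partial F$ then yields $(1-s)\M{J}^{1,\gamma}_s(F,\Omega)\to\omega_{n-1}P^\gamma(F,\Omega)$, while the contribution of $|x-y|\ge\rho$ is $O(1)$ in $s$ and hence killed by the $(1-s)$ prefactor, thanks to the Gaussian decay of $\gamma$. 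For the second summand $\M{J}^{2,\gamma}_s(F,\Omega)$, the hypothesis $\haus^{n-1}(\partial F\cap\partial\Omega)=0$ ensures that the pairs $(x,y)\in\Omega\times\Omega^c$ with $|\chi_F(x)-\chi_F(y)|=1$ at small scale concentrate on an arbitrarily small neighbourhood of $\partial F\cap\partial\Omega$, outside which $|x-y|$ is bounded from below; combined with the Lipschitz regularity of $\partial\Omega$ this gives $(1-s)\M{J}^{2,\gamma}_s(F,\Omega)\to 0$.

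The main obstacle is the weighted BBM asymptotics on the first summand: although the continuity of $\gamma$ makes it morally clear that $e^{-|x|^2/2}$ emerges as the correct surface density, the replacement of $\gamma(x,y)$ by $\gamma(x,x)$ on the scale $r\in(0,\rho)$ requires uniform--in--$s$ control of the error $\gamma(x,y)-\gamma(x,x)$, which is provided by the smoothness of $\gamma$ together with Gaussian tail bounds over the bounded domain $\Omega$. The preliminary smooth approximation is standard but delicate, since one must simultaneously match the Gaussian perimeter on $\Omega$ and arrange $\partial F_k$ to meet $\partial\Omega$ in an $\haus^{n-1}$--negligible set, a step where the Lipschitz regularity of $\partial\Omega$ enters crucially.
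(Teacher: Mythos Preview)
Your plan is sound and would yield the proposition, but it differs in its scaffolding from the paper's proof. The paper reduces to the dense class of \emph{polyhedra} $\Pi$ with $P(\Pi,\partial\Omega)=0$ and then runs an explicit cube--covering argument: tile $\partial\Pi\cap\Omega$ by cubes $Q_i^\ve$ of side $\ve$, split the $x$--integral into three cases according to the position of $x$ relative to $(\partial\Pi)_\ve$ and the cubes, and on each cube use the scaling $\M{J}^1_s(H,(\ve+\ve^2)Q)=(\ve+\ve^2)^{n-s}\M{J}^1_s(H,Q)$ together with the known value $\Gamma_n^\ast=\omega_{n-1}$; the Gaussian density enters through $\sum_i \ve^{n-1}e^{-|x_i^\ve|^2/2}\to P^\gamma(\Pi,\Omega)$. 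For $\M{J}^{2,\gamma}_s$ the paper bounds the contribution at distance $\ge\delta$ from $\partial\Omega$ trivially and the near--boundary contribution by $2\M{J}^{1,\gamma}_s(\Pi,\Omega_\delta^-\cup\Omega_\delta^+)$, whence $(1-s)\M{J}^{2,\gamma}_s\le 2\omega_{n-1}P^\gamma(\Pi,\Omega_\delta^-\cup\Omega_\delta^+)\to 0$. Your route---smooth approximants via mollification and Sard, then a direct BBM--type computation on the constant sequence using $\gamma(x,x+r\theta)=e^{-|x|^2/2}+O(r)$---is a legitimate alternative and arguably more conceptual; the paper's version has the advantage of piggybacking verbatim on \cite{ADPM} with the Gaussian weight appearing only as a Riemann--sum correction.

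One point in your sketch deserves tightening. Your claim that the small--scale pairs in $\M{J}^{2,\gamma}_s$ ``concentrate on an arbitrarily small neighbourhood of $\partial F\cap\partial\Omega$'' is not literally how the estimate works, and the hypothesis $\haus^{n-1}(\partial F\cap\partial\Omega)=0$ is not what you actually use there. The clean statement (and the one the paper proves) is that pairs at scale $<\delta$ lie in $\Omega_\delta^-\cup\Omega_\delta^+$, so the small--scale contribution is dominated by $\M{J}^{1,\gamma}_s(F,\Omega_\delta^-\cup\Omega_\delta^+)$; after multiplying by $(1-s)$ and applying your Step~1 bound this is at most $\omega_{n-1}P^\gamma(F,\Omega_\delta^-\cup\Omega_\delta^+)$, which tends to $P^\gamma(F,\partial\Omega)=0$ as $\delta\downarrow 0$. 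The vanishing uses $P(F,\partial\Omega)=0$ (continuity of the perimeter measure on the shrinking collars), not the geometry of $\partial F\cap\partial\Omega$ per se. With this correction your argument goes through.
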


In these two propositions lurk the main differences between the Gaussian case and the Euclidean case. Once we have proved Proposition \ref{GammaLiminfProp} and Proposition \ref{GammaLimsupProp}, we are done: the proof of Theorem \ref{trm4} is completely identical to the proof of \cite[Theorem 3]{ADPM}, with the only forethought of adding a $\gamma$-superscript in every considered functional, and remembering the simple inequality $\gamma(x, \, y) \le 1$. 

\medskip

We will use the following notation: we write $x\in\R{n}$ as $(x',x_n)$ with $x'\in\R{n-1}$
and $x_n\in\R{}$; we denote by $H$ the halfspace $\{x:\ x_n\leq 0\}$ and
by $Q=(-1/2,1/2)^n$ the canonical unit cube; we denote by $B_r(x)$ the ball of radius $r$ centered at $x$ and,
unless otherwise specified, $B_r:= B_r(0)$;  for every $h\in \R{n}$ and function $u$ defined on
$U\subset \R{n}$ we set $\tau_h u(x):= u(x+h)$ for all $x\in U-h$. For the definition and basic properties of the perimeter $P(E,\Omega)$ in the
sense of De Giorgi, we refer to \cite{AFP,giu}.

\subsection{Proof of proposition \ref{GammaLiminfProp}}
We denote by ${\mathcal C}$ the family of all $n$-cubes in $\R{n}$
$$
{\mathcal C}:=\left\{R(x+rQ):\ x\in\R{n},\,\,r>0,\,\,R\in
SO(n)\right\}.
$$

Let $s_i\uparrow 1$ and sets $E_i\subset\R{n}$ with $\chi_{E_i}\to \chi_E$
in $L^1_{\loc}(\R{n})$ as $i\to \infty$ be given. We  need to show the inequality 
\begin{equation}\label{liminf}
\liminf_{i\to\infty}(1-s_i)\M{J}^{1, \, \gamma}_{s_i}(E_i,\Omega)\ge \omega_{n-1} P^{\gamma}(E,\Omega).
\end{equation}
We can assume that the left-hand side of \eqref{liminf} is finite,
otherwise the inequality is trivial.  We choose an arbitrary $\Omega' \Subset \Omega$, and find a positive constant $c_0=c_0(\Omega')$ so that $c_0 \le \gamma(x, \, y),\ \forall \, x, \, y \in \Omega'$. Then we easily obtain the inequality
\[
c_0 \limsup_i \M{J}^1_{s_i} (1-s_i) (E_i, \, \Omega'_i) \le \lim_i (1- s_i)\M{J}^{1, \, \gamma}_{s_i}(E_i, \, \Omega') < +\infty .
\] 
By \cite[Theorem 1]{ADPM} and the arbitrariness of $\Omega'$, we conclude that $E$ has locally finite perimeter.  We shall denote by $\mu$ its perimeter measure, i.e.
$\mu(A)=|D\chi_E|(A)$ for any Borel set $A\subset\Omega$, and we
shall use the following property of sets of finite perimeter: for
$\mu$-a.e. $x_0\in\Omega$ there exists $R_{x_0}\in SO(n)$ such that
$(E-x_0)/r$ locally converge in measure to $R_{x_0}H$ as $r\to 0$.
In addition,
\begin{equation}\label{densitycube}
\lim_{r\to 0}\frac{\mu(x_0+rR_{x_0}Q)}{r^{n-1}}=1,\quad \text{for  $\mu$-a.e. } x_0.
\end{equation}
Indeed this property holds for every $x_0\in \M{F} E$, see \cite[Theorem 3.59(b)]{AFP}.

Now, given a cube $C\in{\mathcal C}$ contained in $\Omega$, we set
$$\alpha_i(C):=(1-s_i)\M{J}^{1, \, \gamma}_{s_i}(E_i,C), \qquad \mbox{and} \qquad \alpha (C):=\liminf_{i\to\infty}\alpha_i(C).$$
Moreover, we define $C_r(x_0):=x_0+rR_{x_0}Q$, where $R_{x_0}$ is as in
\eqref{densitycube}, and the measure
\[
\nu(E) = \int_{E} e^{-\frac{1}{2} \abs{x}^2} \, d\mu(x),\qquad \mbox{for every } E \mbox{ Borel set}.
\]
We claim that  for $\mu$-a.e. $x_0\in \R{n}$ it holds
\begin{equation}\label{eq1}
\omega_{n-1} \le \liminf_{r\to 0}\frac{\alpha(C_r(x_0))}{\nu( C_r(x_0))}.
\end{equation}
If the claim is true, then we observe that for all $\ve>0$ the family
$$\M{A}:=\Big\{C_r(x_0)\subset\Omega\;:\; \omega_{n-1} \nu(C_r(x_0)) \le (1+\ve) \alpha(C_r(x_0))   \Big\}$$
is a fine covering of $\mu$-almost all of $\Omega$. By a suitable
variant of Vitali's theorem (see \cite{M}), we can extract a
countable subfamily of disjoint cubes $\{C_j\subset\Omega:j\in J\}$ such that
$\nu\big(\Omega\setminus\bigcup\limits_{j\in J} C_j\big)=0$, whence
\begin{equation*}
\begin{split}
\omega_{n-1} P^\gamma(E,\Omega)&=\omega_{n-1} \nu\Big(\bigcup_{j\in J}C_j\Big)=\omega_{n-1}\sum_{j\in J}\nu(C_j)\leq (1+\ve)\sum_{j\in J}\alpha(C_j)\le (1+\ve)\liminf_{i\to\infty}\sum_{j\in J}\alpha_i(C_j)\\
&\leq (1+\ve)\liminf_{i\to\infty}(1-s_i)\M{J}^{1,\gamma}_{s_i}(E_i,\Omega).
\end{split}
\end{equation*}
Since $\ve>0$ is arbitrary, we get the $\Gamma-\liminf$ estimate.

We now prove the inequality in \eqref{eq1} at any point $x_0$ such
that $(E-x_0)/r$ converges locally in measure as $r\to 0$ to
$R_{x_0}H$ and \eqref{densitycube} holds. Because of
\eqref{densitycube} and the continuity of the exponential, we know that 
\[
\lim_{r \to 0} \Intm_{C_r(x_0)} e^{- \frac{1}{2} \abs{x}^2 } \, d\mu(x) = e^{-\frac{1}{2} \abs{x_0}^2}.
\]
Thus we just need to show the inequality
\begin{equation}\label{eq11}
\liminf_{r\to 0}\frac{\alpha(C_r(x_0))}{r^{n-1}}\geq \omega_{n-1} e^{-\frac{1}{2} \abs{x_0}^2}.
\end{equation}
Since from now on $x_0$ is fixed, we assume $R_{x_0}=I$, so that the limit
hyperplane is $H$ and the cubes $C_r(x_0)$ are the standard ones
$x_0+rQ$. Let us choose a sequence $r_k\to 0$ such that
$$\liminf_{r\to 0}\frac{\alpha(C_r(x_0))}{r^{n-1}}=
\lim_{k\to \infty} \frac{\alpha(C_{r_k}(x_0))}{r_k^{n-1}}.$$ For $k>0$
we can choose $i(k)$ large enough that the following conditions hold:
\begin{equation*}
\left\{
\begin{aligned}
&\alpha_{i(k)}(C_{r_k}(x_0))\leq \alpha(C_{r_k}(x_0))+r_k^n,\\
&r_k^{1-s_{i(k)}}\ge 1-\frac{1}{k},\\
&\Intm_{C_{r_k}(x_0)}|\chi_{E_{i(k)}}-\chi_E|dx<\frac{1}{k}.
\end{aligned}
\right.
\end{equation*}
We observe that, although $\M{J}^{1, \, \gamma}_s$ does not enjoy the nice scaling properties of $\M{J}^{1}_s$, it still satisfies the equality
\[
\M{J}^{1, \, \gamma}_s (E, \, C_r(x_0))= r^{n - s}\M{J}^{1, \, \gamma_{x_0, \, r}}_s((E- x_0)/r, \, Q),
\]
where we have set
\[
\gamma_{x_0, \, r}(x, \, y) = \exp \left(- \frac{1}{4} \left( \abs{x_0 + rx}^2 + \abs{x_0 + r y}^2  \right)\right).
\]
In particular, for $r$ sufficiently small, and thus for every $r_k$ with $k$ sufficiently big, the following inequality holds:
\[
\abs{\gamma_{x_0, \, r}(x, \, y) - e^{- \frac{1}{2} \abs{x_0}^2}} \le 4 r.
\]
Then we infer
\begin{equation*}
\begin{split}
\frac{\alpha(C_{r_k}(x_0))}{r_k^{n-1}}&\ge \frac{\alpha_{i(k)}(C_{r_k}(x_0))}{r_k^{n-1}}-r_k=\frac{(1-s_{i(k)})\M{J}^{1, \, \gamma_{x_0, \, r_k}}_{s_{i(k)}}((E_{{i(k)}}-x_0)/r_k,Q)r_k^{n-s_{i(k)}}}{r_k^{n-1}}-r_k\\
&\ge
\Big(1-\frac{1}{k}\Big)(1-s_{i(k)})\M{J}^{1, \, \gamma_{x_0, \, r_k}}_{s_{i(k)}}((E_{{i(k)}}-x_0)/r_k,Q)-r_k \\
&\ge
\Big(1-\frac{1}{k}\Big)(1-s_{i(k)})\M{J}^{1}_{s_{i(k)}}((E_{{i(k)}}-x_0)/r_k,Q)(e^{- \frac{1}{2} \abs{x_0^2}} - r_k)-r_k,
\end{split}
\end{equation*}
i.e.
$$\lim_{k\to\infty} \frac{\alpha(C_{r_k}(x_0))}{r_k^{n-1}}\ge e^{- \frac{1}{2} \abs{x_0}^2}
\liminf_{k\to\infty}(1-s_{i(k)})\M{J}^1_{s_{i(k)}}((E_{{i(k)}}-x_0)/r_k,Q).$$
Since we have
$$\lim_{k\to\infty}\int_{Q}|\chi_{(E_{{i(k)}}-x_0)/r_k}-\chi_{(E-x_0)/r_k}|dx=0,$$
and
$$\lim_{k\to\infty}\int_{Q}|\chi_{(E-x_0)/r_k}-\chi_H|dx=0,$$
it follows that $(E_{{i(k)}}-x_0)/r_k\to H$ in $L^1(Q)$. 
If we define
\begin{equation}\label{defGamman}
\Gamma_n:=\inf\Big\{\liminf_{s\uparrow 1}(1-s)\M{J}^1_s(E_s,Q)\;\Big|\;
\chi_{E_s}\to \chi_H\text{ in }L^1(Q)\Big\},
\end{equation}
it has been proved in \cite[Lemmata 7, 11, 12]{ADPM} that $\Gamma_n=\omega_{n-1}$.
Hence we conclude the claimed inequality \eqref{eq11}.

\subsection{Proof of proposition \ref{GammaLimsupProp}}
As in \cite{ADPM}, it is enough to prove the $\Gamma-\limsup$ inequality for the
collection $\M{B}$ of polyhedra $\Pi$  of finite perimeter which
satisfy $P(\Pi,\partial\Omega)=0$. $\M{B}$ is dense in
energy, i.e. such that for every set $E$ of finite perimeter there
exists $E_k\in\M{B}$ with $\chi_{E_k}\to \chi_E$ in
$L^1_{\loc}(\R{n})$ as $k\to\infty$ and
$\limsup_kP^\gamma(E_k,\Omega)=P^\gamma(E,\Omega)$. We recall that a polyhedron $\Pi$ is in the class $\M{B}$ if and only if
$$\lim_{\delta\to 0}P(\Pi,\Omega^+_\delta\cup \Omega^-_\delta)=0,\quad \mbox{
or equivalently} \quad \lim_{\delta\to 0}P^\gamma(\Pi,\Omega^+_\delta\cup \Omega^-_\delta)=0, $$
where we have set
\begin{equation}\label{omegadelta}
\begin{split}
\Omega^+_\delta:=\{x\in \Omega^c\;|\;d(x,\Omega)<\delta\}, \qquad \Omega^-_\delta:=\{x\in \Omega\;|\;d(x,\Omega^c)<\delta\}.
\end{split}
\end{equation}

We are going to prove that for a polyhedron $\Pi\subset\R{n}$ there holds
\begin{equation}\label{poly}
\limsup_{s\uparrow 1}(1-s)\M{J}^\gamma_s(\Pi,\Omega)\leq
\Gamma_n^* P^\gamma(\Pi,\Omega)+ 2\Gamma_n^* \lim_{\delta\to 0} P^\gamma(\Pi,\Omega^+_\delta\cup \Omega^-_\delta),
\end{equation}
where
\begin{equation}\label{gamman*}
\Gamma_n^*:= \limsup_{s\uparrow 1}(1-s)\M{J}^1_s(H,Q).
\end{equation}
Again, as in \cite[Lemmata 7, 11, 12]{ADPM} we have the equality $\Gamma_n^* = \omega_{n-1}$. We shall divide the proof into two main steps.

 \emph{Step 1.} We first estimate $\M{J}_s^{1, \, \gamma}(\Pi, \Omega)$. For a fixed $\ve>0$ set
\begin{equation*}
(\de\Pi)_\ve:=\{x\in\Omega\;|\; d(x,\de\Pi)<\ve\},\quad (\de\Pi)_\ve^-:=(\de\Pi)_\ve \cap \Pi.
\end{equation*}
We can find $N_\ve$ disjoint cubes $Q^\ve_i\subset\Omega$, $1\leq
i\le N_\ve$, of side length $\ve$ satisfying the following
properties:
\begin{itemize}
\item[(i)]
if $\tilde Q_i^\ve$ denotes the dilation of $Q_i^\ve$ by a factor
$(1+\ve)$, then each cube $\tilde Q_i^\ve$ intersects exactly one
face $\Sigma$ of $\de\Pi$, its barycenter belongs to $\Sigma$ and
each of its sides is either parallel or orthogonal to $\Sigma$;
\item[(ii)]  $\M{H}^{n-1}\left(((\de\Pi)\cap \Omega)\setminus \bigcup_{i=1}^{N_\ve}Q_i^\ve
\right)=|P(\Pi,\Omega)-N_\ve\ve^{n-1} |\to 0$ as $\ve \to 0$.
\end{itemize}
Property (ii), combined with the continuity of the exponential and the property of measures, easily implies 
\begin{equation}\label{expdecay}
\abs{P^\gamma(\Pi, \Omega) - \ve^{n-1}   \sum_{i=1}^{N_\ve}  e^{-\frac{1}{2} \abs{x_i^\ve}^2 } } \to 0 \mbox{ as } \ve \to 0,
\end{equation}
where we have set by $x_i^\ve$ the center of the cubes $Q^\epsilon_i$.
For $x\in\R{n}$ set
$$I_s(x):=\int_{\Pi^c\cap \Omega}\frac{e^{-\frac{1}{4} \abs{y}^2}}{|x-y|^{n+s}} \, dy.$$
We consider several cases.

\medskip

\noindent\emph{Case 1:} $x\in (\Pi\cap \Omega)\setminus(\de\Pi)_\ve^-$. Then for $y\in \Pi^c\cap \Omega$ we have $|x-y|\geq \ve$, hence
$$I_s(x)\leq \int_{(B_\ve(x))^c} \frac{1}{|x-y|^{n+s}}dy=n\omega_n\int_\ve^\infty \frac{1}{\rho^{s+1}}d\rho=\frac{n\omega_{n}}{s\ve^s},$$
since $n\omega_n=\M{H}^{n-1}(S^{n-1})$. Therefore
\begin{equation}\label{case1}
\int_{(\Pi\cap \Omega)\setminus(\de\Pi)_\ve^-}  I_s(x) e^{- \frac{1}{4} \abs{x}^2} \, dx\leq \frac{n\omega_{n} }{s\ve^s} \int_{\Pi\cap \Omega} e^{- \frac{1}{4} \abs{x}^2} \, dx.
\end{equation}

\noindent\emph{Case 2:} $x\in
(\de\Pi)_\ve^-\setminus\bigcup_{i=1}^{N_\ve}Q_i^\ve$. Then
\begin{equation}\label{case2.0}
I_s(x)\leq\int_{(B_{d(x,\Pi^c \cap
\Omega)}(x))^c}\frac{1}{|x-y|^{n+s}}dy=
n\omega_{n}\int_{d(x,\Pi^c\cap\Omega)}^\infty\frac{1}{\rho^{s-1}}d\rho=\frac{n\omega_{n}}{s[d(x,\Pi^c\cap\Omega)]^s}.
\end{equation}
Now write $(\de\Pi)\cap \Omega=\bigcup_{j=1}^J\Sigma_j$, where each $\Sigma_j$ is the intersection of a face of $\de\Pi$ with $\Omega$, and define
$$(\de\Pi)^-_{\ve,j}:=\{x\in (\de\Pi)^-_\ve:\dist(x,\Pi^c\cap\Omega)=\dist(x,\Sigma_j)\}.$$
Clearly $(\de\Pi)^-_\ve=\bigcup_{j=1}^J (\de\Pi)^-_{\ve,j}$. Moreover we have
$$(\de\Pi)^-_{\ve,j}\subset\{x+t\nu:x\in \Sigma_{\ve,j},\, t\in (0,\ve),\, \nu\text{ is the interior unit normal to }\Sigma_{\ve,j}\}, $$
and $\Sigma_{\ve,j}$ is the set of points $x$ belonging to the same hyperplane as $\Sigma_j$ and with $\dist(x,\Sigma_j)\le \ve$. Clearly $\M{H}^{n-1}(\Sigma_{\ve,j})\le \M{H}^{n-1}(\Sigma_j)+C\ve$ as $\ve\to 0$. Then from \eqref{case2.0} we infer
\begin{equation}\label{case2}
\begin{split}
\int_{(\de\Pi)_\ve^-\setminus\bigcup_{i=1}^{N_\ve}Q_i^\ve}I_s(x) e^{- \frac{1}{4} \abs{x}^2} \, dx&\leq \frac{n\omega_{n}}{s}\sum_{j=1}^J\int_{(\de\Pi)_{\ve,j}^-\setminus\bigcup_{i=1}^{N_\ve}Q_i^\ve}\frac{1}{[d(x,\Pi^c)]^s} \, dx\\
&\leq \frac{n\omega_{n}}{s}\sum_{j=1}^J\int_{(\de\Pi)_{\ve,j}^-\setminus\bigcup_{i=1}^{N_\ve}Q_i^\ve}\frac{1}{[d(x,\Sigma_{\ve,j})]^s} \, dx\\
&\leq\frac{n\omega_{n}}{s}\sum_{j=1}^J\int_{(\Sigma_{\ve,j})\setminus\bigcup_{i=1}^{N_\ve}Q_i^\ve}\bigg(\int_0^\ve \frac{dt}{t^s}\bigg)\, d\M{H}^{n-1}\\
&= \frac{n\omega_{n}\ve^{1-s}}{s(1-s)}\M{H}^{n-1}\left(\bigg(\bigcup_{j=1}^J\Sigma_{\ve,j}\bigg)\setminus\bigcup_{i=1}^{N_\ve}Q_i^\ve\right)=\frac{\ve^{1-s}o(1)}{s(1-s)},
\end{split}
\end{equation}
with error $o(1)\to 0$ as $\ve\to 0$ and independent of $s$.

\medskip

\noindent\emph{Case 3:} $x\in \Pi\cap \bigcup_{i=1}^{N_\ve}Q_i^\ve$. In this case we write
\begin{equation*}
\begin{split}
I_s(x)&=\int_{(\Pi^c\cap \Omega)\cap \{y:|x-y|\geq \ve^2\}}\frac{e^{- \frac{1}{4} \abs{y}^2}}{|x-y|^{n+s}} \, dy +\int_{(\Pi^c\cap \Omega)\cap \{y:|x-y|< \ve^2\}}\frac{e^{- \frac{1}{4} \abs{y}^2}}{|x-y|^{n+s}} \, dy\\
&=:I_s^1(x)+I_s^2(x).
\end{split}
\end{equation*}
Then, similar to the case 1,
$$I_s^1(x)\leq n\omega_{n}\int_{\ve^2}^\infty\frac{1}{\rho^{s+1}} \, d\rho=\frac{n\omega_{n}}{s\ve^{2s}},$$
hence (since all cubes are contained in $\Omega$)
\begin{equation}\label{case3.1}
\int_{\Pi\cap \bigcup_{i=1}^{N_\ve}Q_i^\ve}I_s^1(x) e^{-\frac{1}{4} \abs{x}^2 } \, dx\leq
\frac{n\omega_{n}}{s\ve^{2s}} \int_\Omega e^{- \frac{1}{4} \abs{x}^2} \, dx.
\end{equation}
As for $I_s^2(x)$ observe that if $x\in Q_i^\ve$ and $|x-y|\leq
\ve^2$, then $y\in \tilde Q_i^\ve$, where $\tilde Q_i^\ve$ is the
cube obtained by dilating $Q_i^\ve$ by a factor $1+\ve$ (hence the
side length of $\tilde Q_i^\ve$ is $\ve+\ve^2$). Then
\begin{equation}\label{case3.2}
\begin{split}
\int_{\Pi\cap \bigcup_{i=1}^{N_\ve}Q_i^\ve}I_s^2(x) e^{- \frac{1}{4} \abs{x}^2} \, dx 
&\le\sum_{i=1}^{N_\ve}\int_{\Pi\cap Q_i^\ve}\int_{\Pi^c\cap \tilde Q_i^\ve}\frac{e^{- \frac{1}{4} \left(\abs{x}^2 + \abs{y}^2\right)}}{|x-y|^{n+s}} \, dydx \\
&\le\sum_{i=1}^{N_\ve}\int_{\Pi\cap \tilde Q_i^\ve}\int_{\Pi^c\cap \tilde Q_i^\ve}\frac{e^{- \frac{1}{4} \left(\abs{x}^2 + \abs{y}^2\right)}}{|x-y|^{n+s}} \, dydx \\
&\le \left(\sum_{i=1}^{N_\ve} e^{- \frac{1}{2} \abs{x^\ve_i}^2} \right) \M{J}^1_s(H,(\ve+\ve^2) Q)(1 + \ve^2) \\
&= \underbrace{\left( \ve^{n-1 } \sum_{i=1}^{N_\ve} e^{- \frac{1}{2} \abs{x^\ve_i}^2} \right)}_{= P^\gamma(\Pi, \, \Omega) + o(1)} \ve^{1-s}(1+\ve)^{n-s}\M{J}^1_s(H,Q)(1 + \ve^2),
\end{split}
\end{equation}
where in the last identity we used the scaling property 
\begin{equation}\label{scale}
\M{J}^i_s(\lambda
E,\lambda\Omega)=\lambda^{n-s}\M{J}^i_s(E,\Omega)\qquad \text{for
}\lambda>0, \; i=1,2.
\end{equation}
Keeping $\ve>0$ fixed, letting $s$ go to $1$ and putting \eqref{case1}-\eqref{case3.2} together, we infer
\begin{equation*}
\begin{split}
\limsup_{s\uparrow 1}(1-s)\M{J}^{1, \, \gamma}_s(\Pi,\Omega) \leq o(1)+\Gamma_n^*P^{\gamma}(\Pi,\Omega) = o(1) + \omega_{n-1} P^{\gamma}(\Pi, \, \Omega),
\end{split}
\end{equation*}
with error $o(1)\to 0$ as $\ve\to 0$ uniformly in $s$. Since $\ve>0$
is arbitrary, we conclude
\begin{equation}\label{limsupJ1}
\limsup_{s\uparrow 1}(1-s)\M{J}^{1, \, \gamma}_s(\Pi,\Omega) \leq \omega_{n-1} P^{\gamma}(\Pi,\Omega).
\end{equation}

\medskip

\noindent\emph{Step 2.} It now remains to estimate $\M{J}_s^{2,\gamma}$. Let us start by considering the term
$$\int_{\Pi\cap \Omega}\int_{\Pi^c\cap \Omega^c}\frac{e^{- \frac{1}{4} \left(\abs{x}^2 + \abs{y}^2\right)}}{|x-y|^{n+s}} \, dydx.$$

\noindent\emph{Case 1:} $x\in \Pi\cap (\Omega\setminus\Omega_\delta^-)$. Then for
$y\in \Pi^c\cap\Omega^c$ we have $|x-y|\ge \delta$, whence
$$I(x):=\int_{\Pi^c\cap \Omega^c}\frac{e^{ - \frac{1}{4} \abs{y}^2 }}{|x-y|^{n+s}} \, dy\leq
n\omega_{n}\int_\delta^\infty\frac{d\rho}{\rho^{1+s}}=\frac{n\omega_{n}}{s\delta^s}.$$

\noindent\emph{Case 2:} $x\in \Pi\cap \Omega_\delta^-$. In this
case, using the same argument of case $1$ for $y\in
\Pi^c\cap(\Omega^c\setminus\Omega_\delta^+)$, we have
\begin{equation*}
\begin{split}
I(x)=\int_{\Pi^c\cap\Omega_\delta^+}\frac{e^{- \frac{1}{4} \abs{y}^2}}{|x-y|^{n+s}} \, dy +
\int_{\Pi^c\cap(\Omega^c\setminus\Omega_\delta^+)}\frac{e^{- \frac{1}{4} \abs{y}^2}}{|x-y|^{n+s}} \, dy\le
\int_{\Pi^c\cap\Omega_\delta^+}\frac{e^{- \frac{1}{4} \abs{y}^2}}{|x-y|^{n+s}} \, dy +\frac{n\omega_{n}}{s\delta^s}.
\end{split}
\end{equation*}
Therefore
\begin{equation*}
\begin{split}
\int_{\Pi\cap \Omega}\int_{\Pi^c\cap \Omega^c}\frac{e^{- \frac{1}{4} \left(\abs{x}^2 + \abs{y}^2\right)}}
{|x-y|^{n+s}} \, dy dx &\le\frac{2n\omega_{n}}{s\delta^s} \int_\Omega e^{- \frac{1}{4} \abs{x}^2} \, dx+
\int_{\Pi\cap\Omega_\delta^-}\int_{\Pi^c\cap\Omega_\delta^+}\frac{e^{- \frac{1}{4} \left(\abs{x}^2 + \abs{y}^2\right)}}{|x-y|^{n+s}} \, dy dx\\
&\le\frac{2n\omega_{n}}{s\delta^s} \int_\Omega e^{- \frac{1}{4} \abs{x}^2} \, dx + \int_{\Pi\cap(\Omega_\delta^-\cup \Omega_\delta^+)}
\int_{\Pi^c\cap(\Omega_\delta^-\cup \Omega_\delta^+)}\frac{e^{- \frac{1}{4} \left(\abs{x}^2 + \abs{y}^2\right)}}{|x-y|^{n+s}} \, dydx.
\end{split}
\end{equation*}
An obvious similar estimate can be obtained by swapping $\Pi$ and
$\Pi^c$, finally yielding
\begin{equation*}
\begin{split}
\M{J}_s^{2,\gamma}(\Pi,\Omega)&\le \frac{4n\omega_{n}}{s\delta^s} \int_\Omega e^{- \frac{1}{4} \abs{x}^2} \, dx  + 2 \int_{\Pi\cap(\Omega_\delta^-\cup \Omega_\delta^+)}\int_{\Pi^c\cap(\Omega_\delta^-\cup \Omega_\delta^+)}\frac{e^{- \frac{1}{4} \left(\abs{x}^2 + \abs{y}^2\right)}}{|x-y|^{n+s}} \, dydx\\
&=
\frac{4n\omega_{n}}{s\delta^s} \int_\Omega e^{- \frac{1}{4} \abs{x}^2} \, dx   +2\M{J}_s^{1, \, \gamma}(\Pi,\Omega_\delta^-\cup
\Omega_\delta^+).
\end{split}
\end{equation*}
Using inequality \eqref{limsupJ1} applied with the open set $ \Omega_\delta^-\cup \Omega_\delta^+ $, we get
$$\limsup_{s\uparrow 1}(1-s)\M{J}_s^{2, \, \gamma}(\Pi,\Omega)\le 2 \omega_{n-1} P^\gamma(\Pi,\Omega_\delta^-\cup \Omega_\delta^+).$$
Since $\delta>0$ is arbitrary, letting $\delta$ go to zero, we conclude the proof of the Proposition.

\section{First and second variation}
In this section we calculate the first and second variation of $\M{J}^\gamma_{s}(E)$. A similar analysis has been done in \cite{FFMMM} in order to prove the local minimality of the ball for a functional involving nonlocal terms.

First, we fix some notation. Given a vector field  $X \in C_{c}^{2}(\mathbb{R}^n , \mathbb{R}^{n})$, the {\it associated flow} is defined as the  solution of the Cauchy problem
\begin{eqnarray} \label{flusso}
\begin{cases}
\displaystyle\frac{\partial}{\partial t}\Phi(x,t)=X(\Phi (x,t)) \vspace{5pt}
\\
\Phi(x,0)=x.
\end{cases}
\end{eqnarray}
In the following, we shall always write $\Phi_t$ to denote the map $\Phi(\cdot,t)$.
Note that for any given $X$ there exists $\delta>0$ such that, for $t\in[-\delta,\delta]$, the map $\Phi_t$ is a diffeomorphism coinciding with the identity map outside a compact set.

If $E\subset \mathbb{R}^n$ is  measurable, we set $E_t:=\Phi_t(E)$. Denoting by $J\Phi_t$ the $n$-dimensional Jacobian of $\Phi_t$, the first and second derivatives of $J\Phi_t$ are given by
\begin{equation}\label{utili}
\frac{\partial}{\partial t} \restrict{t=0} J\Phi_t =\text{div}X,\qquad\frac{\partial^2}{\partial t^2} \restrict{t=0}   J\Phi_t=\text{div} ((\text{div}X)X).
\end{equation}

Finally, given a sufficiently smooth  bounded open set $E\subset \R{n}$ and a vector field $X$, we recall that the {\it first variation of $\M{J}^\gamma_{s}(E)$} along the vector field $X$ is defined by  
$$
\delta \M{J}^\gamma_{s}(E)[X]:=\frac{d}{dt}\restrict{t=0}\M{J}^\gamma_{s}(E_t),
$$
where $\Phi_t$ is the flow associated with $X$. 
The {\it second variation of $\M{J}^\gamma_{s}(E)$} along the vector field $X$ is defined by
$$
\delta^2 \M{J}^\gamma_{s}(E)[X]= \frac{d^2}{dt^2}\restrict{t=0} \M{J}^\gamma_s (E_t).
$$
If $X$ is a vector field such that $X:=\phi \nu_E$ on $\de E$, where $\nu_E$ denotes the exterior normal to $E$, using the area formula and the divergence theorem, the first  variation of the Gaussian volume can be computed as
\begin{eqnarray} \label{vol1}
\nonumber
\frac{d}{dt} \restrict{t=0}\gamma (E_t)=\frac{d}{dt} \restrict{t=0}\int_{E} J\Phi_t (x)e^{-\frac{|\Phi_t |^2}{2}}dx
= \int_E \left( \diver X - \langle X,  x\rangle \right) e^{-\frac{|x|^2}{2}}dx \\=
\int_E  \diver \left(X e^{-\frac{|x|^2}{2}}\right)dx= \int_{\de E}\phi(x) e^{-\frac{|x|^2}{2}}d\haus^{n-1}_x.
\end{eqnarray}
If $E$ is a set of class $C^2$, given a smooth function $\phi: \de E\rightarrow \mathbb{R}$, it can be extended in a neighborhood $U$ of $\de E$ so that
\begin{equation} \label{hyp}
\frac{\de}{\de \nu} \phi +\phi(H-\langle x, \nu_E\rangle)=0 \quad \text{on}\,\, \de E.    
\end{equation}
The second variation of the Gaussian volume along the vector field $X$ such that $X=\phi \nu_E$ on $\de E$ and $\phi$ satisfies \eqref{hyp}, can be calculated using the divergence theorem and reads as 
\begin{eqnarray*} \label{vol2}
\begin{split}
\frac{d^2}{dt^2} \restrict{t=0}\gamma (E_t)&=\int_E\diver \left(  \diver \left(X e^{-\frac{|x|^2}{2}} \right) X\right)dx=
\int_{\de E}\phi\left(\frac{\de}{\de \nu} \phi +\phi(H-\langle x, \nu_E\rangle )\right) e^{-\frac{|x|^2}{2}}d\haus^{n-1}_x=0
\end{split}
\end{eqnarray*}
Thus, we say that a vector field preserves the Gaussian volume of $E$ if it satisfies
\begin{eqnarray} \label{tecnico}
\int_{\de E}\phi(x) e^{-\frac{|x|^2}{2}}d\haus^{n-1}_x= 0  \quad\text{ and }\quad \frac{\de \phi}{\de \nu}
+\phi(H-\langle x, \nu_E\rangle)=0 \; \text{ for  }\, x\in \de E.
\end{eqnarray}
We note that without this assumptions the expression of the second variation of the Gaussian perimeter even in the local framework is quite complicated, see \cite[Eq. (17)]{BBJ}.

In order to compute the first and second variation of $\M{J}^\gamma_s$, due to the singularity of the Kernel in the integrand, we need to pass through approximations. Thus, given $\delta \in [0,1)$, let $\eta_\delta \in C^\infty_c([0,+\infty),[0,1])$ be such that $\eta_\delta = 1$ on $[0,\delta]\cup [1/\delta, \infty]$, $\eta_\delta=0$
on $[2\delta, 2/\delta]$, $|\eta'|\leq 2/\delta$ on $[0,\infty)$, and $\eta \downarrow 0$ for every $s \in (0,1)$ as $\delta \rightarrow 0^+$. Then we define
$$
K_{\delta} (z):=(1- \eta_\delta(|z|))\frac{1}{|z|^{n+s}}.
$$
Now we show the following theorem.
\begin{trm} \label{variations}
Let $E$ be an open set of class $C^2$ and $X \in C^2_c(\R n,\R n)$ a vector field such that $X = \phi \nu_E$ on $\de E$. Then the first variation of $\M{J}^\gamma_s (E)$ along a vector field $X$ is given by
\begin{equation}\label{varprima}
\de \M{J}^\gamma_s (E)[X]=\int_{\de E}H^{*}_{\de E}(x)(X(x), \nu_E(x))d\haus ^{n-1}_x,
\end{equation}
while the second variation reads as
\begin{equation}\label{varseconda}
\begin{split}
\de ^2  \M{J}^\gamma_s (E)[X]&=\int_ {\de E}\int_{\de E}\frac{e^{- \frac{1}{4} \left(\abs{x}^2 + \abs{y}^2\right)}}{|x-y|^{n+s}}
\left( \abs{\phi (x)-\phi(y) }^2 -\phi ^2 |\nu_E(x)-\nu_E(y)|\right) d\haus^{n-1}_xd\haus^{n-1}_y
\\
&\qquad  +\int_{\partial E}H^*_\delta \left( \phi \left(H-\langle x,\nu_E \rangle \right) +\frac{\de \phi}{\de \nu}\right)\phi d\haus ^{n-1}_x \\
&\qquad -\int_{\de E}\phi^2(x)\int_{\R n}\frac{\langle y- x, \nu_E\rangle }{2} \frac{e^{- \frac{1}{4} \left(\abs{x}^2 + \abs{y}^2\right)}}{|x-y|^{n+s}}dy \, d\haus^{n-1}_x.
\end{split}
\end{equation}
Moreover, if $X$ is volume preserving, then
\begin{equation}\label{varseconda1}
\begin{split}
\de ^2  \M{J}^\gamma_s (E)[X]&=\int_ {\de E}\int_{\de E}\frac{e^{- \frac{1}{4} \left(\abs{x}^2 + \abs{y}^2\right)}}{|x-y|^{n+s}}
\left( \abs{\phi (x)-\phi(y) }^2 -\phi ^2 |\nu_E(x)-\nu_E(y)|\right) d\haus^{n-1}_xd\haus^{n-1}_y
\\
&\qquad -\int_{\de E}\phi^2(x)\int_{\R n}\frac{\langle y-x,\nu_E(x)\rangle}{2} \frac{e^{- \frac{1}{4} \left(\abs{x}^2 + \abs{y}^2\right)}}{|x-y|^{n+s}}dy \, d\haus^{n-1}_x.
\end{split}
\end{equation}
\end{trm}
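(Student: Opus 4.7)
My plan is to use the regularization $K_\delta$ to remove the singularity at $|x-y|=0$, compute the variations of the smooth approximation
\[
\M{J}^{\gamma,\delta}_s(E):=\int_E\!\int_{E^c}K_\delta(x-y)\gamma(x,y)\,dx\,dy,
\]
and then pass to the limit $\delta\to 0^+$ by dominated convergence, using $X\in C^2_c$ and $E\in C^2$ to secure the absolute convergence via antisymmetric cancellations at $\partial E$.

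For the first variation, I would change variables $x=\Phi_t(u)$, $y=\Phi_t(v)$ so that the domain of integration becomes independent of $t$:
\[
\M{J}^{\gamma,\delta}_s(E_t)=\int_E\!\int_{E^c}K_\delta(\Phi_t(u)-\Phi_t(v))\,\gamma(\Phi_t(u),\Phi_t(v))\,J\Phi_t(u)\,J\Phi_t(v)\,du\,dv.
\]
Differentiating at $t=0$ and using $\nabla_u\gamma(u,v)=-\tfrac{u}{2}\gamma(u,v)$ together with \eqref{utili}, the integrand collapses into
\[
\diver_u\!\bigl[K_\delta(u-v)\gamma(u,v)X(u)\bigr]+\diver_v\!\bigl[K_\delta(u-v)\gamma(u,v)X(v)\bigr].
\]
Applying the divergence theorem in each variable separately (noting $\nu_{E^c}=-\nu_E$) reduces the expression to a single boundary integral over $\partial E$; sending $\delta\to 0^+$ identifies the principal-value kernel $H^{*}_{\partial E}(x)$ and gives \eqref{varprima}.

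For the second variation I would differentiate once more in $t$, parametrizing $\partial E_t=\Phi_t(\partial E)$ with tangential Jacobian $J_\tau\Phi_t$. Standard shape-derivative identities supply $\frac{d}{dt}\big|_{t=0}\langle X(\Phi_t),\nu_{E_t}(\Phi_t)\rangle=\phi\,\partial_\nu\phi$ and $\frac{d}{dt}\big|_{t=0}J_\tau\Phi_t=\phi H$, which together produce the factor $\phi(\partial_\nu\phi+\phi H)$ multiplying $H^{*,\delta}_{\partial E}$. The remaining derivative, of $H^{*,\delta}_{\partial E_t}(\Phi_t(x))$ itself, is computed via the further change of variables $y=\Phi_t(z)$ and furnishes two kinds of contributions. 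First, after a symmetrization patterned on \cite{FFMMM} and using $|\phi(x)\nu_E(x)-\phi(y)\nu_E(y)|^2=|\phi(x)-\phi(y)|^2+\phi(x)\phi(y)|\nu_E(x)-\nu_E(y)|^2$, the kernel- and Jacobian-derivative pieces condense into the double boundary integral of \eqref{varseconda}. Second, and this is the Gaussian novelty, differentiating $\gamma(\Phi_t(x),\Phi_t(z))$ in $t$ injects the factors $-\tfrac{x}{2}\gamma\cdot X(x)$ and $-\tfrac{z}{2}\gamma\cdot X(z)$: the first, after multiplication by $\phi$ and integration over $\partial E$, contributes a $-\phi^2\langle x,\nu_E\rangle H^{*,\delta}$ term that completes the bracket $\phi(H-\langle x,\nu_E\rangle)$ appearing in the middle term of \eqref{varseconda}; the second, after integration by parts in $z$, produces the last integral of \eqref{varseconda} with the factor $\langle y-x,\nu_E\rangle/2$. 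Passing to the limit $\delta\to 0^+$ concludes \eqref{varseconda}, and \eqref{varseconda1} is then immediate because the second condition in \eqref{tecnico} annihilates precisely the middle term.

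The hardest part will be the second-variation bookkeeping: each differentiation of $\gamma$ produces a linear-in-position factor $-x/2$ or $-z/2$ that must be carefully combined with the kernel- and Jacobian-derivative contributions before integrating by parts, and verifying that the symmetrization identity of \cite{FFMMM} still regroups the kernel-derivative piece correctly in the presence of the weight $\gamma$ requires some care. Otherwise the scheme runs in close parallel to the Euclidean computations of \cite{FFMMM,BBJ,L}.
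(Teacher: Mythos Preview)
Your overall strategy coincides with the paper's: regularize via $K_\delta$, compute the variations of the smooth functional, and pass to the limit $\delta\to 0^+$ by invoking \cite{FFMMM}. Your first-variation computation is identical to the paper's.

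For the second variation the two routes diverge. The paper differentiates the \emph{bulk} double integral twice, obtaining a ten-term expansion that is then regrouped into four total divergences $I_1,\dots,I_4$; the divergence theorem in $E$ and $E^c$, followed by the tangential/normal splitting on $\partial E$ and a further integration by parts in the bulk variable, produces \eqref{varseconda}. You instead differentiate the already-obtained boundary first-variation formula via shape-derivative identities for $\nu_{E_t}$ and $J_\tau\Phi_t$. Both schemes are legitimate; yours avoids the long algebraic expansion at the cost of tracking the geometric shape-derivative identities, while the paper's is more mechanical but heavier in bookkeeping.

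One point in your sketch is misattributed and would derail the computation if taken literally. The term $-\tfrac{z}{2}\gamma\cdot X(z)$ does \emph{not} by itself produce the last integral of \eqref{varseconda}: note that this integral carries the normal $\nu_E(x)$ at the \emph{outer} boundary point, whereas an integration by parts of an $X(z)$-term alone would bring in $\nu_E(z)$. In fact all three $X(z)$-pieces---the Gaussian derivative $-\tfrac{z}{2}\gamma\cdot X(z)$, the kernel derivative $-\nabla K_\delta\cdot X(z)$, and the Jacobian derivative $\diver X(z)$---combine into the single divergence $\diver_z[K_\delta\gamma X(z)]$, which against $(\chi_{E^c}-\chi_E)$ yields the cross-term $-2\phi(x)\phi(y)$ feeding your symmetrization identity. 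The bulk term with $\langle y-x,\nu_E(x)\rangle/2$ comes instead from the $X(x)$-pieces: the kernel derivative $\phi(x)\langle\nabla K_\delta,\nu_E(x)\rangle\gamma$, rewritten via $\nabla_x K_\delta=-\nabla_z K_\delta$ and integrated by parts in $z$, delivers both a $\langle\nu_E(x),\nu_E(z)\rangle$ boundary piece (completing the symmetrization) and a bulk remainder $-\phi^2\!\int(\chi_{E^c}-\chi_E)\tfrac{\langle z,\nu_E(x)\rangle}{2}K_\delta\gamma\,dz$. Splitting $\langle z,\nu_E(x)\rangle=\langle x,\nu_E(x)\rangle+\langle z-x,\nu_E(x)\rangle$ then supplies the missing half of the $-\phi^2\langle x,\nu_E\rangle H^{*,\delta}$ coefficient (your Gaussian derivative in $x$ only gives $-\tfrac12\phi^2\langle x,\nu_E\rangle H^{*,\delta}$, not the full coefficient you claimed) together with the final $\langle y-x,\nu_E(x)\rangle/2$ integral. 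With this rerouting your plan goes through and lands on \eqref{varseconda}; the paper performs exactly this manipulation in its treatment of the ``second addend'' of $I_1+I_2$.
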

\begin{proof}
Let us call $\M{J}_s^\delta$ the integral associated to the regularized kernel, namely
$$
\M{J}_s^\delta(E)= \int_{E^c}\int_{E}\frac{e^{- \frac{1}{4} \left(\abs{x}^2 + \abs{y}^2\right)}}{|x-y|^{n+s}+\delta}dxdy.
$$
By the definition of $\Phi_t$, the implicit function theorem gives the existence of $\varepsilon>0$ such that the map $\Phi_t$ is a diffeomorphism for all $t \in [-\delta, \delta]$.
Using the area formula, we compute
$$
\M{J}_s^\delta(E_t)= \int_{E^c}\int_{E}\frac{e^{- \frac{1}{4} \left(\abs{\Phi(x,t)}^2 + \abs{\Phi(y,t)}^2\right)}}{|\Phi(x,t)-\Phi(y,t)|^{n+s}+\delta}J\Phi(x,t)J\Phi(y,t)dxdy
$$
We use the first equation in $\eqref{utili}$ to compute the first variation of $\M{J}_s^\delta$
\begin{equation*}
\begin{split}
\frac{d}{dt}_{|_{t=0}}J_s^\delta (E_t)&= \int_{E^c}\int_{E}e^{- \frac{1}{4} \left(\abs{x}^2 + \abs{y}^2\right)}D_z\left(K_{\delta} (x-y)\right)(X(x)-X(y))dxdy
\\ 
& \, \, \, \, + \int_{E^c}\int_{E}K_{\delta} (x-y)\left( D_x(e^{- \frac{1}{4}\left(\abs{x}^2 + \abs{y}^2\right)})X(x)+ D_y(e^{- \frac{1}{4}\left(\abs{x}^2 + \abs{y}^2\right)})X(y)\right)dxdy
\\ 
& \, \, \, \, + \int_{E^c}\int_{E}K_{\delta} (x-y)e^{- \frac{1}{4} \left(\abs{x}^2 + \abs{y}^2\right)}\left(\diver X(x)+\diver X(y)\right)dxdy\\ 
&  =
 \int_{E^c}\int_{E}\diver_x\left( e^{- \frac{1}{4} \left(\abs{x}^2 + \abs{y}^2\right)}K_{\delta} (x-y)X(x)\right)dxdy\\
& \, \, \, \, +\int_{E^c}\int_{E}\diver_y\left(  e^{- \frac{1}{4} \left(\abs{x}^2 + \abs{y}^2\right)}K_{\delta} (x-y)X(y)\right)dxdy
\\ 
&  =\int_{\de E}\int_{\R n}\left(\chi_{E^c}(y)-\chi_E (y)    \right) K_{\delta} (x-y)e^{- \frac{1}{4} \left(\abs{x}^2 + \abs{y}^2\right)}\langle X(x),\nu_E(x)\rangle dyd\haus^{n-1}_x.
\end{split}
\end{equation*}

Now we compute the second variation of $\M{J}_s^\delta$.
\begin{equation}
\begin{split}
\delta^2 \M{J}_s^\delta[X]&=\frac{d^2}{dt^2}_{|_{t=0}} \int_{E^c}\int_E K_\delta(\Phi(x,t)-\Phi(y,t)) e^{- \frac{1}{4} \left(\abs{\Phi(x,t)}^2 + \abs{\Phi(y,t)}^2\right)}\diver X(x)\diver X(y)dxdy\\ 
&= \int_{E^c}\int_{E} D^2_{xx} \left( K_\delta (x-y) {e^{- \frac{1}{4} \left(\abs{x}^2 + \abs{y}^2\right)}} \right)[X(x),X(x)]
dxdy\\
& \quad+  \int_{E^c}\int_{E}\left \langle D_{x} \left( K_\delta (x-y) {e^{- \frac{1}{4} \left(\abs{x}^2 + \abs{y}^2\right)}} \right),DX(x)X(x)\right \rangle dxdy
\nonumber
\\
& \quad+2\int_{E^c}\int_{E} D^2_{yx} \left( K_\delta (x-y) {e^{- \frac{1}{4} \left(\abs{x}^2 + \abs{y}^2\right)}} \right)[X(x),X(y)]dxdy
\\
& \quad+\int_{E^c}\int_{E} D^2_{yy} \left( K_\delta (x-y) {e^{- \frac{1}{4} \left(\abs{x}^2 + \abs{y}^2\right)}} \right)[X(y),X(y)]dxdy\\
& \quad+\int_{E^c}\int_{E}\left \langle D_{y} \left( K_\delta (x-y) {e^{- \frac{1}{4} \left(\abs{x}^2 + \abs{y}^2\right)}} \right),DX(y)X(y)\right \rangle dxdy\\
& \quad +2\int_{E^c}\int_{E}\left \langle D_{x} \left( K_\delta (x-y) {e^{- \frac{1}{4} \left(\abs{x}^2 + \abs{y}^2\right)}} \right),X(x)\right \rangle
(\diver_x X(x)+\diver_y X(y))dxdy
\\
& \quad +2\int_{E^c}\int_{E}\left \langle D_{y} \left( K_\delta (x-y) {e^{- \frac{1}{4} \left(\abs{x}^2 + \abs{y}^2\right)}} \right),X(y)\right \rangle
(\diver_x X(x)+\diver_y X(y))dxdy
\\
& \quad +\int_{E^c}\int_{E} K_\delta (x-y) {e^{- \frac{1}{4} \left(\abs{x}^2 + \abs{y}^2\right)}}
\big( \diver [X(x)\diver(X(x))]+\diver [X(y)\diver(X(y))]\big)dxdy
\\
& \quad +2\int_{E^c}\int_{E} K_\delta (x-y) {e^{- \frac{1}{4} \left(\abs{x}^2 + \abs{y}^2\right)}}\diver_x X(x)\diver_y X(y)dxdy
\end{split}
\end{equation}

We now use the divergence theorem and exploit the symmetry of $K_\delta$ in order to simplify the above expression as follows
\begin{equation}
\begin{split}
\delta^2 \M{J}_s^\delta&= \int_{E^c}\int_{E}\diver_x \left[X(x) \diver_x \left( K_\delta (x-y) {e^{- \frac{1}{4} \left(\abs{x}^2 + \abs{y}^2\right)}}X(x)\right)\right]dxdy
\\
& \quad + \int_{E^c}\int_{E}\diver_y \left[X(y) \diver_y \left( K_\delta (x-y) {e^{- \frac{1}{4} \left(\abs{x}^2 + \abs{y}^2\right)}}X(y)\right)\right]dxdy
\\ & \quad + \int_{E^c}\int_{E}\diver_x \left[X(x) \diver_y \left( K_\delta (x-y) {e^{- \frac{1}{4} \left(\abs{x}^2 + \abs{y}^2\right)}}X(y)\right)\right]dxdy
\\& \quad + \int_{E^c}\int_{E}\diver_y \left[X(y) \diver_x \left( K_\delta (x-y) {e^{- \frac{1}{4} \left(\abs{x}^2 + \abs{y}^2\right)}}X(x)\right)\right]dxdy\\
&=I_1+I_2+I_3+I_4.
\end{split}
\end{equation}
 Using Fubini and the divergence theorems we have
$$
I_1=\int_{\de E} \langle X(x),\nu_E(x) \rangle \int_{E^c}\diver_x \left( K_\delta (x-y) {e^{- \frac{1}{4} \left(\abs{x}^2 + \abs{y}^2\right)}}X(x)\right)dyd\haus^{n-1}_x
$$
and 
\begin{equation}
\begin{split}
I_3&=\int_{\de E} \langle X(x),\nu_E(x) \rangle \int_{E^c}\diver_x \left( K_\delta (x-y) {e^{- \frac{1}{4} \left(\abs{x}^2 + \abs{y}^2\right)}}X(y)\right)dyd\haus^{n-1}_x
\\
&=-\int_{\de E}\int_{\de E}(K_\delta (x-y) {e^{- \frac{1}{4} \left(\abs{x}^2 + \abs{y}^2\right)}}\langle X(y),\nu_E(y)\rangle \langle X(x),\nu_E(x)\rangle d\haus_x^{n-1}d\haus_y^{n-1}.
\end{split}
\end{equation}
We remark that $I_1$ (resp. $I_3$) has the same expression of $I_2$ (resp. $I_4$) exchanging $x$ and $y$. Using this observation and the symmetry of $K_\delta$, we compute
$$
I_1+I_2=\int_{\de E}\langle X(x),\nu_E(x)\rangle  \int_{\R n}\left(\chi_{E^c} (y)-\chi_E (y)\right)\diver_x \left( K_\delta (x-y) {e^{- \frac{1}{4} \left(\abs{x}^2 + \abs{y}^2\right)}}X(x)\right)dyd\haus^{n-1}_x \\,
$$
and 
$$I_3+I_4=-2\int_{\de E}\int_{\de E}(K_\delta (x-y) {e^{- \frac{1}{4} \left(\abs{x}^2 + \abs{y}^2\right)}}\langle X(y),\nu_E(y)\rangle \langle X(x),\nu_E(x)\rangle  d\haus_x^{n-1}d\haus_y^{n-1}.
$$
Next we write $\diver_x X(x)= \diver_{\nu(x)} X(x)+ \diver_{\tau (x)} X(x)$, where $\diver_{\nu(x)}X(x):= \langle DX[\nu_E(x)], \nu_E(x)\rangle$. Using  Fubini's theorem and the divergence theorem on manifolds, we get
\begin{equation}
\begin{split}
I_1+I_2&= \int_{\R n}\left(\chi_{E^c} (y)-\chi_E (y)\right)\int_{\de E}\langle X(x),\nu_E(x)\rangle  \diver_{\tau (x)}\left( K_\delta (x-y) {e^{- \frac{1}{4} \left(\abs{x}^2 + \abs{y}^2\right)}}X(x)\right)d\haus^{n-1}_xdy\\
&\quad+
\int_{\R n}\left(\chi_{E^c} (y)-\chi_E (y)\right)\int_{\de E}\langle X(x),\nu_E(x)\rangle   \diver_{\nu(x)}  \left( K_\delta (x-y) {e^{- \frac{1}{4} \left(\abs{x}^2 + \abs{y}^2\right)}}X(x)\right)d\haus^{n-1}_xdy
\\
&=
\int_{\R n}\left(\chi_{E^c} (y)-\chi_E (y)\right)\int_{\de E}H(x)\langle X(x),\nu_E(x)\rangle ^2 \left( K_\delta (x-y) {e^{- \frac{1}{4} \left(\abs{x}^2 + \abs{y}^2\right)}}X(x)\right)d\haus^{n-1}_xdy\\
&\quad+
\int_{\R n}\left(\chi_{E^c} (y)-\chi_E (y)\right)\int_{\de E}\langle X(x),\nu_E(x)\rangle   \diver_{\nu(x)} \left( K_\delta (x-y) {e^{- \frac{1}{4} \left(\abs{x}^2 + \abs{y}^2\right)}}X(x)\right)d\haus^{n-1}_xdy \\
&=
\int_{\R n}\left(\chi_{E^c} (y)-\chi_E (y)\right)\int_{\de E}H(x)\phi^2 (x)\left( K_\delta (x-y) {e^{- \frac{1}{4} \left(\abs{x}^2 + \abs{y}^2\right)}}X(x)\right)d\haus^{n-1}_xdy\\
&\quad +
\int_{\R n}\left(\chi_{E^c} (y)-\chi_E (y)\right)\int_{\de E}\phi^2(x)\frac{\de}{\de \nu (x)}\left( K_\delta (x-y) {e^{- \frac{1}{4} \left(\abs{x}^2 + \abs{y}^2\right)}}\right)d\haus^{n-1}_xdy\\
&\quad+
\int_{\R n}\left(\chi_{E^c} (y)-\chi_E (y)\right)\int_{\de E}\phi(x)\frac{\de \phi}{\de \nu(x)}K_\delta (x-y) {e^{- \frac{1}{4} \left(\abs{x}^2 + \abs{y}^2\right)}}d\haus^{n-1}_xdy,
\end{split}
\end{equation}
where we used that $X=\phi \nu_E$ and then $\langle D_\tau f,X \rangle =0$ for every $f \in C^1(\de E)$. 
Regarding the second addend of the above expression, using again Fubini's theorem  and the fact that $D_x K_\delta =-D_y K_\delta$, we get
\begin{equation}
\begin{split}
&\int_{E}\frac{\de}{\de \nu (x)}\left( K_\delta (x-y) {e^{- \frac{1}{4} \left(\abs{x}^2 + \abs{y}^2\right)}}\right)dy\\
&=
-\int_E \left[\left \langle D_y K_\delta (x-y) , \nu_E(x)\right \rangle +\frac{\langle x,\nu_E(x)\rangle }{2} K_\delta (x-y) \right] e^{- \frac{1}{4} \left(\abs{x}^2 + \abs{y}^2\right)}dy\\
&=-\int_{\de E}   K_\delta (x-y) e^{- \frac{1}{4} \left(\abs{x}^2 + \abs{y}^2\right)}\left \langle \nu_E(x),\nu_E(y)\right \rangle d\haus^{n-1}_y
-\int_E \frac{\langle x+y,\nu_E(x)\rangle}{2} K_\delta(x-y) e^{- \frac{1}{4} \left(\abs{x}^2 + \abs{y}^2\right)}dy.
\end{split}
\end{equation}
Finally, thanks to the identity $|\nu_E(x)-\nu_E(y)|^2=2- 2\langle \nu_E(x),\nu_E(y)\rangle$, after some elementary calculations we deduce
\begin{equation}
\begin{split}
\de ^2  \M{J}^\delta_s (E)[X]&=\int_ {\de E}\int_{\de E}e^{- \frac{1}{4} \left(\abs{x}^2 + \abs{y}^2\right)}K_\delta(x-y)
\left( \abs{\phi (x)-\phi(y) }^2 -\phi ^2 |\nu_E(x)-\nu_E(y)|\right) d\haus^{n-1}d\haus^{n-1}\\
&\quad +
\int_{\partial E}H^*_\delta \left( \phi \left(H-\langle x,\nu_E\rangle \right) +\frac{\de \phi}{\de \nu}\right)\phi d\haus ^{n-1}\\
&\quad -\int_{\de E}\phi^2(x)\int_{\R n}\left(\chi_{E^c} (y)-\chi_E (y)\right)\frac{\langle y-x,\nu_E(x)\rangle }{2} K_\delta(x-y) e^{- \frac{1}{4} \left(\abs{x}^2 + \abs{y}^2\right)}dy d\haus^{n-1}_x.
\end{split}
\end{equation}
At this point we just need to show that the first and second variation of $\M{ J}_s^\delta$ converge, respectively, to the first and second variation of $\M{J}^\gamma_s$ as $\delta$ goes to $0$. The proof of this fact is exactly the same as in \cite{FFMMM}.
\end{proof}

Note that, since
$$
\frac{d}{dt}\restrict{t=0}\int_{E_t} e^{-\frac{x^2}{2}}dx=
\int_{\partial E}e^{-\frac{|x|^2}{2}}\langle X(x),\nu_E(x)\rangle d\haus^{n-1},
$$
we have that the flow $\Phi$ associated to $X$ preserves the Gaussian volume if 
$$
\int_{\partial E}e^{-\frac{|x|^2}{2}}\langle X(x),\nu_E(x)\rangle d\haus^{n-1}=0.
$$
Thus, the Euler-Lagrange equation for the problem 
\begin{equation}\label{minimiz}
\min_{|E|=m} \M{J}^\gamma_s(E)
\end{equation}
is 
$$
\int_{\de E} \phi(x) \int_{\R n}\left(\chi_{E^c}(y)-\chi_E (y)    \right) \frac{e^{- \frac{1}{4} \left(\abs{x}^2 + \abs{y}^2\right)}}{|x-y|^{n+s}}dyd\haus^{n-1}_x
= \lambda \int_{\de E} \phi(x) e^{-\frac{|x|^2}{2}}d\haus_x^{n-1}. 
$$
Moreover, if $E$ is a set of class $C^2$, then thanks to the fundamental lemma of the calculus of variations the above equation can be rewritten as
\begin{equation}\label{minimizza}
\int_{\R n}\left(\chi_{E^c}(y)-\chi_E (y)    \right) \frac{e^{- \frac{1}{4} \abs{y}^2}}{|x-y|^{n+s}}dy= \lambda e^{-\frac{\abs{x}^2 }{4}}, \qquad \forall x \in \partial E.
\end{equation}
$E$ is said to be stationary with respect to the non local Gaussian isoperimetric problem, or equivalently a volume constrained critical point, if it satisfies equation \eqref{minimizza}.

\section{Volume constrained stationary shapes}
In this section we prove that, as opposed to the local setting, the only halfspaces which are stationary with respect to the non local Gaussian isoperimetric problem are the ones generated by hyperplanes passing through the origin.
\begin{trm}
We fix $a \in \mathbb R$ and $\omega \in \mathbb S^{n-1}$. If $H_{\omega,a}:= \{x \in \R n: \langle x, \omega\rangle <a\}$ is stationary with respect to the non local Gaussian isoperimetric problem, then $a=0$, or equivalently, 
$$\frac{1}{(2\pi)^{\frac{n}{2}}}\int_{H_{\omega, a}}e^{-\frac{|x|^2}{2}}dx= \frac 12.$$
\end{trm}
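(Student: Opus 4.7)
The plan is to apply the Euler--Lagrange equation \eqref{minimizza} to $E = H_{\omega,a}$ and extract from it a constraint forcing $a = 0$. First I would use the invariance of $\M{J}^\gamma_s$ under $SO(n)$---both $|x-y|$ and $\gamma(x,y)$ are rotationally invariant---to reduce to the case $\omega = e_n$, so $\partial H_{\omega,a} = \{x_n = a\}$ and $\chi_{H_{\omega,a}^c}(y) - \chi_{H_{\omega,a}}(y) = \mathrm{sign}(y_n - a)$. Writing \eqref{minimizza} at a boundary point $x = (x', a)$, substituting $y = x + z$, and expanding $|x+z|^2 = |x|^2 + 2\langle x, z\rangle + |z|^2$, the factor $e^{-|x|^2/4}$ can be pulled out of the integral and cancels against the right-hand side. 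Since $\langle x, z\rangle = \langle x', z'\rangle + a z_n$, the problem reduces to proving that the function
\[
G(x') := \int_{\R n} \mathrm{sign}(z_n)\, \frac{e^{-|z|^2/4}\, e^{-(\langle x', z'\rangle + a z_n)/2}}{|z|^{n+s}}\, dz
\]
(understood as a principal value near $z=0$) can be independent of $x' \in \R{n-1}$ only when $a = 0$.

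Next I would symmetrize via two elementary involutions. The change $z' \mapsto -z'$ preserves every factor except $e^{-\langle x', z'\rangle/2}$, which can then be replaced by $\cosh(\langle x', z'\rangle/2)$; the change $z_n \mapsto -z_n$ applied to the $\mathrm{sign}(z_n)$ part collapses the $z_n$-integral onto $(0, \infty)$ with coefficient $-2\sinh(a z_n/2)$. Subtracting $G(0)$ to kill the singularity at the origin yields the absolutely convergent representation
\[
G(x') - G(0) = -2\int_{\R{n-1}} e^{-|z'|^2/4}\bigl(\cosh(\langle x', z'\rangle/2) - 1\bigr) \int_0^\infty \frac{e^{-z_n^2/4}\sinh(a z_n / 2)}{(|z'|^2 + z_n^2)^{(n+s)/2}}\,dz_n\,dz'.
\]

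The decisive observation is that every non-exponential factor in this representation has a definite sign: $\cosh - 1 \ge 0$ on $\R{n-1}$, and is strictly positive on the full-measure set $\{\langle x', z'\rangle \ne 0\}$ whenever $x' \ne 0$; while for $z_n > 0$ the factor $\sinh(a z_n / 2)$ has the sign of $a$. Hence if $a \ne 0$, the whole integrand has a single sign and is strictly nonzero on a set of positive measure, so $G(x') \ne G(0)$ for every $x' \ne 0$, contradicting constancy of $G$. Therefore $a = 0$, and the Gaussian-mass statement follows immediately from the reflection $x \mapsto -x$, which sends $H_{\omega, 0}$ onto its complement and thus forces each to carry Gaussian mass $\tfrac 12$.

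The main technical point is the justification of integrability and of the symmetrization manipulations. Near $z = 0$ the raw integrand of $G(x')$ has the non-integrable singularity $|z|^{-(n+s)}$, so the principal-value interpretation and the $\mathrm{sign}$-antisymmetrized form must be reconciled via dominated convergence on the symmetric shells $\{|z| > \varepsilon\}$. After subtracting $G(0)$ the integrand near the origin behaves like $|z'|^2 z_n |z|^{-(n+s)}$, whose radial integral $\int_0^1 r^{2-s}\,dr$ is finite precisely because $s < 1$; at infinity the Gaussian factors supply rapid decay. With these integrability bounds in hand, the applications of Fubini and the sign argument above become fully rigorous.
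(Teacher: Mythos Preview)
Your proof is correct and follows the same overall strategy as the paper: rotate to $\omega = e_n$, substitute $z = y - x$, exploit parity in $z'$ and in $z_n$ to reduce the Euler--Lagrange condition to the statement that
\[
B(x') := \int_{\{z_n > 0\}} \frac{e^{-|z|^2/4}}{|z|^{n+s}}\,\cosh\!\left(\tfrac{\langle z', x'\rangle}{2}\right)\sinh\!\left(\tfrac{a z_n}{2}\right)\,dz
\]
is constant in $x' \in \R{n-1}$, and then conclude by a sign argument. The only difference is in this last step: the paper differentiates $B$ in some coordinate $x_{n-1}$, performs a second hyperbolic addition-formula split $z' = (z'', z_{n-1})$, discards another odd term, and only then arrives at an integrand of definite sign; you instead form $B(x') - B(0)$ directly and use $\cosh - 1 \ge 0$, which is shorter and avoids the second expansion. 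One small slip: the singularity at the origin is already removed by the factor $\sinh(a z_n/2)$ produced by the $z_n$-symmetrization (the paper notes this right after its equation \eqref{minimizza4}), so $B$ itself is absolutely convergent before you subtract $B(0)$; the subtraction is needed only for the sign argument, not for integrability.
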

\begin{proof}
Up to rotation, we can assume $\omega=e_n$. 
We start observing that, for every $x \in \partial E$, it holds $\langle x, e_n\rangle=a$. This implies that, with the change of coordinate $z=y-x$, if $\langle y, e_n\rangle<a$, then $\langle z, e_n\rangle<0$ and then we can write
\begin{equation}\label{minimizza1}
\begin{split}
\int_{E} \frac{e^{- \frac{1}{4} \abs{y}^2}}{|x-y|^{n+s}}dy&= \int_{\{ y_n<a\}} \frac{e^{- \frac{1}{4} \abs{y}^2}}{|x-y|^{n+s}}dy=\int_{\{ z_n<0\}} \frac{e^{- \frac{1}{4} (\abs{z}^2+\abs{x}^2+2\langle z, x\rangle)}}{|z|^{n+s}}dy\\
&=\int_{\{ z_n>0\}} \frac{e^{- \frac{1}{4} (\abs{z}^2+\abs{x}^2-2\langle z, x\rangle)}}{|z|^{n+s}}dy, \qquad \forall x \in \partial E.
\end{split}
\end{equation}
Analogously, we compute 
\begin{equation}\label{minimizza2}
\int_{E^c} \frac{e^{- \frac{1}{4} \abs{y}^2}}{|x-y|^{n+s}}dy= \int_{\{ y_n>a\}} \frac{e^{- \frac{1}{4} \abs{y}^2}}{|x-y|^{n+s}}dy=\int_{\{ z_n>0\}} \frac{e^{- \frac{1}{4} (\abs{z}^2+\abs{x}^2+2\langle z, x\rangle)}}{|z|^{n+s}}dz \qquad \forall x \in \partial E.
\end{equation}
Plugging equations \eqref{minimizza1}, \eqref{minimizza2} in equation \eqref{minimizza}, we get 
\begin{equation}\label{minimizza3}
\int_{\{ z_n>0\}} \frac{e^{- \frac{1}{4} (\abs{z}^2+\abs{x}^2})}{|z|^{n+s}}\left (e^{ \frac{\langle z, x\rangle}{2}}- e^{- \frac{\langle z, x\rangle}{2}}\right)dz= \lambda e^{-\frac{\abs{x}^2 }{4}}, \qquad \forall x \in \partial E,
\end{equation}
which in turn reads
\begin{equation}\label{minimizza4}
2\int_{\{ z_n>0\}} \frac{e^{- \frac{1}{4} \abs{z}^2}}{|z|^{n+s}}\sinh\left(\frac{\langle z, x\rangle}{2}\right)dz= \lambda , \qquad \forall x \in \partial E.
\end{equation}
We remark that the integral in \eqref{minimizza4} is well defined, since $\lim_{x\to 0}\frac{\sinh(x)}{x}=1$.

We split $x=(x',x_n)$ and we observe that
\begin{equation}\label{minimizza5}
\begin{split}
\sinh\left(\frac{\langle z, x\rangle}{2}\right)&=\sinh\left(\frac{\langle z', x'\rangle+ z_nx_n}{2}\right)\\
&=\sinh\left(\frac{\langle z', x'\rangle}{2}\right)\cosh\left(\frac{ z_nx_n}{2}\right)+\cosh\left(\frac{\langle z', x'\rangle}{2}\right)\sinh\left(\frac{ z_nx_n}{2}\right).
\end{split}
\end{equation}
Plugging \eqref{minimizza5} in \eqref{minimizza4}, we deduce the following equation for every $x \in \partial E$
\begin{equation}\label{minimizza45}
\begin{split}
A+B:=\int_{\{ z_n>0\}}& \frac{e^{- \frac{1}{4} \abs{z}^2}}{|z|^{n+s}}\sinh\left(\frac{\langle z', x'\rangle}{2}\right)\cosh\left(\frac{ z_na}{2}\right)dz\\
&+\int_{\{ z_n>0\}} \frac{e^{- \frac{1}{4} \abs{z}^2}}{|z|^{n+s}}\cosh\left(\frac{\langle z', x'\rangle}{2}\right)\sinh\left(\frac{ z_na}{2}\right)dz= \frac \lambda 2.
\end{split}
\end{equation}
Since $\frac{e^{- \frac{1}{4} (\abs{z'}^2+\abs{z_n}^2})}{(\abs{z'}^2+\abs{z_n}^2)^{\frac{n+s}{2}}}\sinh\left(\frac{\langle z', x'\rangle}{2}\right)\cosh\left(\frac{ z_na}{2}\right)$ is odd in $z'$, we deduce
$$A=\int_0^\infty\int_{\mathbb R^{n-1}} \frac{e^{- \frac{1}{4}( \abs{z'}^2+\abs{z_n}^2)}}{(\abs{z'}^2+\abs{z_n}^2)^{\frac{n+s}{2}}}\sinh\left(\frac{\langle z', x'\rangle}{2}\right)\cosh\left(\frac{ z_na}{2}\right)dz'dz_n=0.$$
Plugging this information in \eqref{minimizza5} and taking the partial derivative in $x_j$, for every $j=1,\dots,n-1$, we deduce
$$
\int_{\{ z_n>0\}} \frac{e^{- \frac{1}{4} \abs{z}^2}}{|z|^{n+s}}\frac{\partial}{ \partial x_j}\left(\cosh\left(\frac{\langle z', x'\rangle}{2}\right)\right)\sinh\left(\frac{ z_na}{2}\right)dz= 0.
$$
Assuming without loss of generality that $j=n-1$ and denoting $x'=(x'',x_{n-1})$, we obtain
\begin{equation}\label{minimizza6}
\begin{split}
C&+D:=\int_{\{ z_n>0\}} \frac{e^{- \frac{1}{4} \abs{z}^2}}{|z|^{n+s}}\frac{\partial}{ \partial x_j}\left(\cosh\left(\frac{\langle z'', x''\rangle}{2}\right)\cosh\left(\frac{\langle z_{n-1}, x_{n-1}\rangle}{2}\right)\right) \sinh\left(\frac{ z_na}{2}\right)dz\\
& +\int_{\{ z_n>0\}} \frac{e^{- \frac{1}{4} \abs{z}^2}}{|z|^{n+s}}\frac{\partial}{ \partial x_j}\left(\sinh\left(\frac{\langle z'', x''\rangle}{2}\right)\sinh\left(\frac{\langle z_{n-1}, x_{n-1}\rangle}{2}\right)\right)\sinh\left(\frac{ z_na}{2}\right)dz= 0.
\end{split}
\end{equation}
Since $\frac{e^{- \frac{1}{4} \abs{z}^2}}{|z|^{n+s}}\cosh\left(\frac{\langle z_{n-1}, x_{n-1}\rangle}{2}\right)z_{n-1}$ is odd in $z_{n-1}$, we deduce
$$D=\int_0^\infty \int_{\mathbb R^{n-2}} \int_{\mathbb R} \frac{e^{- \frac{1}{4} \abs{z}^2}}{|z|^{n+s}}\sinh\left(\frac{\langle z'', x''\rangle}{2}\right)\cosh\left(\frac{\langle z_{n-1}, x_{n-1}\rangle}{2}\right)z_{n-1}\sinh\left(\frac{ z_na}{2}\right)dz=0.$$
Plugging this information in \eqref{minimizza6}, we get that for every $x \in \partial E$ it holds
\begin{equation}\label{minimizza7}
\begin{split}
\int_{\{ z_n>0\}} \frac{e^{- \frac{1}{4} \abs{z}^2}}{|z|^{n+s}}\cosh\left(\frac{\langle z'', x''\rangle}{2}\right)\sinh\left(\frac{\langle z_{n-1}, x_{n-1}\rangle}{2}\right)\sinh\left(\frac{ z_na}{2}\right)z_{n-1}dz=0.
\end{split}
\end{equation}
We denote
$$C(z_n,x):=\int_{\mathbb R^{n-1}} \frac{e^{- \frac{1}{4} \abs{(z',z_n)}^2}}{|(z',z_n)|^{n+s}}\cosh\left(\frac{\langle z'', x''\rangle}{2}\right)\sinh\left(\frac{\langle z_{n-1}, x_{n-1}\rangle}{2}\right)z_{n-1}dz''dz_{n-1},$$
and we observe that if $x_{n-1}\neq 0$, then $C(z_n,x)\neq 0$ since the integrand is even in the variables $z''$ and $z_{n-1}$. Equation \eqref{minimizza7} then reads
\begin{equation}\label{minimizza8}
\int_0^\infty C(z_n,x)\sinh\left(\frac{ z_na}{2}\right)dz_n=0, \qquad \forall x \in \partial E,
\end{equation}
and since for every $z_n>0$
$$\sinh\left(\frac{ z_na}{2}\right)\left\{ \begin{array}{ccc}>0 & \text{if } a>0\\ =0 & \text{if } a=0\\ <0 & \text{if } a<0 \end{array}\right.,$$
equation \eqref{minimizza8} can hold if and only if $a=0$.
\end{proof}

\section{Acknowledgements}
The first author has been partially supported by the NSF DMS Grant No.~1906451.
The second author was partially supported by the Academy of Finland grant 314227.

{\small \noindent
Antonio De Rosa
\\
Department of Mathematics, University of Maryland, 4176 Campus Dr, College Park, MD 2074, USA
\\
\texttt{anderosa@umd.edu}
}

\bigskip

{\small \noindent
Domenico Angelo La Manna \\
Department of Mathematics and Statistics, P.O.\ Box 35 (MaD), FI-40014, University of Jyv\"askyl\"a, Finland.\\
\texttt{domenicolamanna@hotmail.it}
}
\end{document}